\documentclass{article}
\usepackage[utf8]{inputenc}
\usepackage[T1]{fontenc}
\usepackage{amsmath,amssymb}
\usepackage[english]{babel}
\usepackage{geometry}
\usepackage{amsthm}
\usepackage{float}
\usepackage{hyperref}
\usepackage{graphicx}
\usepackage[numbers]{natbib}

\newtheorem{theorem}{Theorem}[subsection]
\newtheorem{definition}[theorem]{Definition}

\newtheorem{lemma}[theorem]{Lemma}
\newtheorem{property}[theorem]{Property}

\title{Fractal Hyper-Trees: Combinatorial Enumeration, Symmetry Properties, and Ultrametric Structures}
\author{
    \textbf{Salone Jean-Jacques}\thanks{ORCID: \url{https://orcid.org/0009-0001-1033-5694}} \\
    \small CRREF (Centre de Recherches et de Ressources en Éducation et Formation, EA 4538) \\
    \small University of the Antilles \\
    \small \texttt{jean-jacques.salone@univ-antilles.fr}
}
\date{}
\begin{document} 
\maketitle

\section{Introduction}

The mathematical analysis of complex relational systems has traditionally relied on graph-theoretic frameworks, where pairwise interactions govern the topology of the network \cite{voloshin_hypergraphs}. However, many systems across discrete mathematics, physics, and computer science exhibit multi-way, higher-order relationships that cannot be faithfully reduced to binary edges. To capture these multi-variable constraints, the theory of hypergraphs provides a natural and expressive generalization \cite{berge_hypergraphs}. Concurrently, fractal geometry and self-similar architectures offer essential tools for modeling systems characterized by scale invariance, recursive nesting, and hierarchical organization \cite{falconer_fractal}.

While classical fractal structures largely investigate metric spaces, self-similar graphs, or recursive tilings, the systematic synthesis of hypergraph combinatorics with strict hierarchical inclusion remains underexplored. In particular, bridging the gap between finite combinatorial configurations and broader topological spaces requires a careful algebraic foundation. Such hierarchical structures naturally suggest connections to ultrametric spaces, where the standard triangle inequality is strengthened to the ultrametric property—a feature frequently explored in complex systems, spin glass theory, and hierarchical clustering models \cite{rammal_ultrametric}. Furthermore, algebraic and number-theoretic analogies suggest that recursive tree limits might mirror properties found in $p$-adic integer rings ($\mathbb{Z}_p$), providing potential pathways to link discrete combinatorial growth with continuous ultrametric frameworks \cite{koblitz_padic, cassels_local, robert_padic}.

This paper introduces and formalizes Fractal Hyper-Trees (FHT), focusing primarily on finite combinatorial configurations designed to study hypergraphs characterized by deep recursive inclusion hierarchies and controlled relational textures. By establishing an axiomatic framework for FHTs, we outline a descriptive pathway connecting finite combinatorial enumeration, algebraic symmetry classification, metric space considerations in the sense of Gromov \cite{gromov_metric}, and prospective asymptotic behaviours.

The remainder of this paper is structured as follows. Section 1 establishes the axiomatic foundation of Fractal Hyper-Trees, introduces the algebraic sum operator for disjoint composition, and details the algorithmic enumeration framework decomposing structures into flat Sperner clutters and imbricated components. Section 2 investigates the symmetry properties of FHTs, classifying foundational structures through their automorphism groups, invariant points, and vertex orbits. Section 3 examines the global hierarchical ultrametric characteristics, discussing metric macro-states alongside explicit matrix representations for small vertex counts. Section 4 explores potential asymptotic perspectives via formal vertex substitution operators, suggesting pathways toward continuous ultrametric structures and drawing heuristic parallels with the ring of $p$-adic integers.

\section{Fractal Hyper-Tree: Definition and Enumeration}
\subsection{Definition of the Finite Fractal Hyper-Tree Structure}

\begin{definition}[Finite Fractal Hyper-Tree Structure]
Let $\mathcal{F}$ denote a Fractal Hyper-Tree (FHT) defined over a finite set of vertices $V = \{v_1, v_2, \dots, v_{|V|}\}$. It is structured as a graded family of hypergraph sets $(\mathcal{H}_n)_{n=0}^N$, where $N \in \mathbb{N}$ is a fixed maximum depth. For each level $n$, the set of hypergraphs is written as $\mathcal{H}_n = \{H_{n,i}\}_{i \in I_n}$, where each element is a pair $H_{n,i} = (V_{n,i}, E_{n,i})$ with $V_{n,i} \subseteq V$, and $I_n$ is a finite index set.

The family satisfies the following axiomatic conditions:

\begin{enumerate}
    \item[(i)] \textbf{Partition by Connected Components:}  
    For each level $n \in \{0, \dots, N\}$, the set $\mathcal{H}_n = \{H_{n,i}\}_{i \in I_n}$ consists of hypergraphs $H_{n,i}$ that correspond precisely to the disjoint connected components of the structure at level $n$. Here, connectedness is defined in terms of edge-path connectedness: two vertices $x, y \in V_{n,i}$ are connected if there exists a sequence of hyperedges $(e_1, \dots, e_k) \in E_{n,i}^k$ such that $x \in e_1$, $y \in e_k$, and $e_j \cap e_{j+1} \neq \emptyset$ for all $j \in \{1, \dots, k-1\}$. Each $H_{n,i}$ captures one such maximal connected component.

    \item[(ii)] \textbf{Top-Level Singularity and Connectedness:}  
    At the maximum depth level $N$, the set $\mathcal{H}_N$ consists of a single hypergraph $H_{N,1} = (V, E_{N,1})$ whose vertex set spans the entire space $V$, and this top-level hypergraph is edge-path connected.

    \item[(iii)] \textbf{Base Level Structure:}  
    The base level $\mathcal{H}_0 = \{H_{0,i}\}_{i \in I_0}$ partitions the base elements into trivial singleton hypergraphs. Specifically, for each element $v \in V$, there exists a corresponding base hypergraph $H_{0,i} = (\{v\}, \{\{v\}\})$, such that the union of all base vertex sets recovers $V$:
    $$\bigcup_{i \in I_0} V_{0,i} = V$$

    \item[(iv)] \textbf{Fractal Inclusion (Strict Hierarchy):}  
    This axiom defines the specific structural class of FHTs by prohibiting partial intersections: for each level $n \in \{1, \dots, N\}$, for every $H \in \mathcal{H}_n$, for every $e \in E_H$, and for every $H' \in \mathcal{H}_k$ with $0 \le k < n$, if the hyperedge $e$ intersects the vertex set $V_{H'}$ ($e \cap V_{H'} \neq \emptyset$), then $e$ must strictly contain the entire vertex set $V_{H'}$:
    $$e \cap V_{H'} \neq \emptyset \implies V_{H'} \subsetneq e$$

    \item[(v)] \textbf{Hierarchical Filiation:}  
    For any level $n \in \{1, \dots, N\}$ and each $H \in \mathcal{H}_n$, there exists a lower-level hypergraph $H' \in \mathcal{H}_{n-1}$ and at least one hyperedge $e \in E_H$ such that the entire vertex set of $H'$ is strictly contained within $e$:
    $$\exists H' \in \mathcal{H}_{n-1}, \quad \exists e \in E_H \quad \text{such that} \quad V_{H'} \subsetneq e$$

    \item[(vi)] \textbf{Hyperedge Non-Triviality (Maximal Children Coverage):}  
    To prevent degenerate unary branching, for all $n \in \{1, \dots, N\}$, for every $H \in \mathcal{H}_n$, and for every hyperedge $e \in E_H$, the set $C$ of maximal lower-level hypergraphs strictly contained in $e$—defined as:
    $$C = \left\{ H' \in \bigcup_{k=0}^{n-1} \mathcal{H}_k \;\middle|\; (V_{H'} \subsetneq e) \text{ and } ( \nexists H'' \in \bigcup_{k=0}^{n-1} \mathcal{H}_k \bigm| V_{H'} \subsetneq V_{H''} \subseteq e ) \right\}$$
    satisfies the non-triviality condition of branching into at least two components:
    $$|C| \ge 2$$
    
\end{enumerate}
\end{definition}

\subsection{Sum of Disjoint FHT}

To build hierarchical structures, we define an algebraic sum that combines multiple independent hypergraphs while omitting trivial singletons. Let $\{H_1, H_2, \dots, H_m\}$ be a family of $m$ mutually disjoint hypergraphs, where each $H_i = (V_i, E_i)$ ($V_i \cap V_j = \emptyset$ for $i \neq j$). 

The \textbf{sum} of these $m$ disjoint hypergraphs, denoted by $S = \sum_{i=1}^m H_i$, is defined as the hypergraph $S = (V, E)$ such that:
\begin{align*}
    V &= \bigcup_{i=1}^m V_i \\
    E &= \left( \bigcup_{i=1}^m E_i \right) \cup \{V\} \setminus \{\{i\} \mid i \in V\}
\end{align*}
In other words, the vertex set is the disjoint union of all individual vertex sets, and the edge set consists of all local edges combined with the global hyperedge $V$, excluding any individual singleton edges $\{i\}$.

\begin{property}[FHT Structure Preservation]
    The sum $S = \sum_{i=1}^m H_i$ of $m$ disjoint Finite Fractal Hyper-Trees preserves the defining structural axioms of an FHT, transitioning from flat components to a valid imbricated hierarchical structure.
\end{property}
\begin{proof}
    We verify the axioms for the resulting hypergraph $S = (V, E)$ where $E = \left( \bigcup_{i=1}^m E_i \right) \cup \{V\} \setminus \{\{i\} \mid i \in V\}$:
    \begin{enumerate}
        \item \textbf{Finite Vertex Ground Set ($V$):} The ground set is a finite, non-empty union of the disjoint vertex sets $V_i$.
        \item \textbf{Top-Level Global Edge ($V \in E$):} The full vertex set $V$ is explicitly added as the root hyperedge, satisfying the top-level connectedness requirement.
        \item \textbf{Hierarchical Inclusion (Non-Sperner Enclosure):} Unlike flat Sperner clutters, the global edge $V$ and internal edges correctly exhibit strict hierarchical containment ($e \subset V$ for internal edges $e \in E_i$), perfectly satisfying Axiom (iv) (Fractal Inclusion) without partial overlaps. Trivial singleton edges $\{i\}$ are explicitly removed, preventing degeneracy.
        \item \textbf{Edge-Path Connectedness:} Each component $H_i$ is internally connected, and the introduction of the global edge $V$ (which intersects all components via its sub-elements) guarantees global edge-path connectivity across the entire structure.
        \item \textbf{Non-Triviality / Absence of Singletons:} For $v > 1$, all individual singleton edges $\{i\}$ are excluded from $E$ by construction.
        \item \textbf{Hierarchical Filiation:} Each constituent $H_i$ is preserved as an independent sub-hypergraph structure strictly contained within the overarching root edge $V$, satisfying Axioms (v) and (vi).
    \end{enumerate}
\end{proof}

\subsection{Algorithmic Framework for FHT Enumeration}

To compute the total number of Finite Fractal Hyper-Trees (FHTs) of size $v$, denoted $f(v)$, our framework decomposes the set of structures into two disjoint categories: flat structures (connected Sperner hypergraphs), counted by $S(v)$, and imbricated structures, counted by $I(v)$, such that $f(v) = S(v) + I(v)$.

\subsubsection{Part 1: Enumeration of Flat Structures}
This part presents an efficient computational method designed to enumerate and count non-isomorphic, connected Sperner hypergraphs (clutters) over $v$ vertices:
\begin{enumerate}
    \item \textbf{Bitmask Encoding:} Vertices and hyperedges are represented as integer bitmasks (e.g., vertices $0$ and $2$ correspond to $101_2 = 5_{10}$).
    \item \textbf{Bitwise Sperner Checking:} Subset inclusion is evaluated instantly using native CPU bitwise operations:
    \[ (E_i \ \& \ E_j) \neq E_i \quad \text{and} \quad (E_i \ \& \ E_j) \neq E_j \]
    \item \textbf{Bit-Level Connectivity Evaluation:} The 2-section graph's connectivity is verified through a Breadth-First Search (BFS) utilizing bitmask bit-shifting.
    \item \textbf{Incremental Backtracking with Pruning:} Hypergraphs are constructed edge by edge. Subtrees that violate the Sperner condition during expansion are pruned immediately.
    \item \textbf{Isomorphism Filtering via Canonical Certificates:} To discard isomorphic duplicates, each valid hypergraph is mapped to a bipartite graph representation and evaluated using canonical labeling algorithms (\texttt{pynauty}), pruning branches that yield previously seen certificates.
\end{enumerate}

\subsubsection{Part 2: Composition of Imbricated Structures via the Sum Operation}
Once $S(v)$ is computed, imbricated structures $I(v)$ are constructed recursively for $v > 2$ by applying the sum operation to collections of smaller sub-FHTs:
\begin{enumerate}
    \item \textbf{Formal Vertex Partitioning of $v$:} We generate all integer partitions $\lambda = (v_1, v_2, \dots, v_K)$ of $v$ satisfying strict structural constraints:
    \[ \sum_{k=1}^K v_k = v, \quad v_1 > 1, \quad 0 < v_k < v \text{ for all } k \]
    \item \textbf{Disjoint Vertex Grouping:} Each integer in the partition represents the size of a subset of vertices. We group the total set of $v$ labeled vertices into $K$ disjoint subsets whose respective cardinalities correspond exactly to the parts $(v_1, v_2, \dots, v_K)$ of the partition, operating up to isomorphism.
    \item \textbf{Sub-FHT Substitution:} Since the structure count is computed recursively, all non-isomorphic FHT classes for smaller sizes $v_k$ are already known. We replace each vertex subset of size $v_k$ with an independent, arbitrarily chosen FHT defined on those $v_k$ vertices $\mathcal{F}_k$.
    \item \textbf{Instantiation via the Sum Function and Aggregation:} Combining these selected sub-FHT families through our sum function $\sum_{k=1}^K \mathcal{F}_k$ generates the complete family of non-isomorphic imbricated FHT structures. Summing these contributions over all valid partitions yields $I(v)$, leading directly to the total count:
    \[ f(v) = S(v) + I(v) \]
\end{enumerate}

The source code and enumeration scripts for the Fractal Hyper-Tree (FHT) structures are publicly available on GitHub and archived on Zenodo~\citep{zenodo_fht_v3}.

\subsection{Enumeration Theorem for Imbricated Structures}

To formally quantify the number of non-isomorphic imbricated structures $I(v)$ for any vertex count $v > 2$, we establish the following closed-form recurrence theorem based on valid integer partitions and multiset combinations of smaller FHT classes.

\begin{theorem}[Enumeration Formula for $I(v)$]\label{thm:iv_enum}
For $v > 2$, the number of non-isomorphic imbricated Finite Fractal Hyper-Trees $I(v)$ is given by:
\[ I(v) = \sum_{\lambda \in \mathcal{P}(v)} \left[ \prod_{s} \binom{f(s) + m_s - 1}{m_s} \right] \]
where:
\begin{itemize}
    \item $\mathcal{P}(v)$ is the set of all valid integer partitions $\lambda = (v_1, v_2, \dots, v_K)$ of $v$ such that $\sum_{k=1}^K v_k = v$ (in descending order $v_1 \ge v_2 \ge \dots \ge v_K$), with $v_1 > 1$, $0 < v_k < v$.
    \item $s$ denotes a distinct part size (block size) appearing among the parts $\{v_1, v_2, \dots, v_K\}$ of the partition $\lambda$.
    \item $m_s$ is the multiplicity (the number of blocks of size $s$) appearing in the partition $\lambda$.
    \item $f(s) = S(s) + I(s)$ denotes the total number of known, non-isomorphic FHT classes of size $s < v$.
\end{itemize}
\end{theorem}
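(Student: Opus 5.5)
The formula will follow from a bijection between isomorphism classes of imbricated FHTs on $v$ vertices and pairs consisting of a partition $\lambda\in\mathcal{P}(v)$ and, for each distinct part size $s$, a multiset of $m_s$ isomorphism classes of FHTs of size $s$. Once this bijection is in place, the count is immediate. Multisets of size $m_s$ drawn from $f(s)$ types number $\binom{f(s)+m_s-1}{m_s}$ (stars and bars). Choices for different part sizes are independent, which gives the product. Summing over $\lambda$ gives $I(v)$.

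\textbf{Step 1: decomposition (surjectivity).} Let $\mathcal{F}$ be imbricated, i.e.\ not a flat Sperner clutter. By Axiom (ii) the top hypergraph $H_{N,1}$ contains the global edge $V$. Remove $V$ and look at the maximal sub-hypergraphs strictly below it, i.e.\ the set $C$ of Axiom (vi) for $e=V$. Axiom (iv) forbids partial overlaps, so these blocks partition $V$ into $K\ge 2$ parts of sizes $v_k<v$. Each block, with its restricted edge set (singleton edges restored for size-one blocks), is itself an FHT. This uses Axioms (iv)--(vi) restricted to the block. Since $\mathcal{F}$ is imbricated, at least one block is nontrivial, so $v_1>1$. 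By the Structure Preservation Property, $\mathcal{F}=\sum_k \mathcal{F}_k$. Hence every imbricated FHT arises from some $\lambda\in\mathcal{P}(v)$ and a choice of sub-FHTs.

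\textbf{Step 2: injectivity up to isomorphism.} Suppose two sums are isomorphic. An isomorphism must fix the unique edge $V$, which is the unique maximal edge containing all others. It therefore permutes the maximal blocks, since these are determined intrinsically from the edge set. So it induces a size-preserving bijection of blocks together with isomorphisms between corresponding sub-FHTs. The partitions coincide, and for each $s$ the multisets of classes of size-$s$ blocks coincide. Conversely, equal data yield isomorphic sums: permute the blocks and apply the block isomorphisms. So isomorphism classes correspond exactly to multisets. Permuting equal-size blocks is what turns ordered tuples into multisets, and this is where $\binom{f(s)+m_s-1}{m_s}$ comes from rather than $f(s)^{m_s}$.

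\textbf{Main obstacle.} The delicate point is that the decomposition in Step 1 must be canonical, i.e.\ recoverable from $(V,E)$ alone. If a summand $\mathcal{F}_k$ itself has top edge $V_k$, the sum keeps $V_k$ as an edge, so the blocks are still recoverable as the maximal proper edges and the singletons not covered by them. There is a risk, though, that a flat summand on one block and the removal of singleton edges interact to create ambiguity. I would check this first on small cases, such as $v=3,4$ against the computed values. I would also verify that the side conditions $v_1>1$ and $0<v_k<v$ exactly exclude the all-singletons partition, which would just give the flat edge $V$ already counted in $S(v)$. This check confirms that $S(v)$ and $I(v)$ count disjoint classes.
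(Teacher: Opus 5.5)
Your counting mechanism is the paper's: an isomorphism of sums permutes equal-size blocks, so each block size contributes a multiset, which gives $\binom{f(s)+m_s-1}{m_s}$. Your Step~2 even supplies the injectivity that the paper only asserts.

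\textbf{The failure is Step~1.} Axiom~(ii) only requires $H_{N,1}$ to span $V$ and to be edge-path connected. It does not put $V$ into the edge set (the flat $\mathcal{F}_5$ satisfies (ii) without it), and nothing in (i)--(vi) forces it for non-Sperner structures either. On $V=\{1,2,3,4\}$ take $\mathcal{H}_1=\{(\{1,2\},\{\{1,2\}\})\}$ and $\mathcal{H}_2=\{(V,\{\{1,2,3\},\{1,2,4\}\})\}$. This is the grading of $\mathcal{F}_{22}$ with its top edge $\{1,2,3,4\}$ replaced by $\{1,2,3\}$ and $\{1,2,4\}$. Each top edge strictly contains $\{1,2\}$ and every singleton it meets, and its set $C$ is $\{\{1,2\},\{3\}\}$, resp.\ $\{\{1,2\},\{4\}\}$. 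The top level is connected, so (i)--(vi) hold. Yet the structure is not Sperner, has no edge equal to $V$, and is none of $\mathcal{F}_{21},\dots,\mathcal{F}_{26}$.

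So if ``imbricated'' means ``FHT that is not a flat clutter'', your surjectivity claim is false, and so is the formula, since $I(4)$ would exceed $6$. Also, the Structure Preservation Property only says that sums are FHTs; it gives nothing in the converse direction. The paper's proof never attempts this direction. In its framework (Part~2 of the algorithm), imbricated structures are \emph{by definition} the sums $\sum_k \mathcal{F}_k$ over $\lambda\in\mathcal{P}(v)$, and the theorem counts these up to isomorphism. Adopt that definition and drop Step~1; surjectivity is then tautological.

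\textbf{What actually needs proof is Step~2, and your block recovery fails there.} Recovering blocks as maximal proper edges plus uncovered singletons breaks for flat summands: for $\mathcal{F}_5$ on a $3$-block, the maximal proper edges are $\{1,2\}$ and $\{2,3\}$, not the block. Use connected components instead.
\begin{itemize}
\item Every edge other than $V$ lies in a block of size $<v$, so $V$ is the unique edge of cardinality $v$.
\item The blocks are exactly the connected components of $(V,E\setminus\{V\})$. Each summand on at least two vertices spans its block and is edge-path connected by Axiom~(ii), summands are vertex-disjoint, and size-one blocks become isolated vertices once their singleton edges are stripped.
\item Hence any isomorphism between two sums fixes $V$, maps blocks to blocks of the same size, and restricts to isomorphisms of the corresponding summands (restoring $\{\{x\}\}$ on singleton blocks).
\end{itemize}
The converse is clear, and stars and bars finishes the count.

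Disjointness from $S(v)$ is then immediate. The condition $v_1>1$ means the largest block carries an edge $e\subsetneq V$ alongside $V$, so no sum is Sperner. The excluded partition $(1,\dots,1)$ would only produce the flat edge set $\{V\}$.
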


\begin{proof}
The construction of an imbricated structure of size $v$ relies on decomposing the vertex set into disjoint subsets according to a valid partition $\lambda \in \mathcal{P}(v)$, and then substituting appropriate sub-FHTs into these subsets. Working up to isomorphism:
\begin{enumerate}
    \item \textbf{Orbit-Based Vertex Partitioning:} Each integer partition $\lambda$ defines a unique structural orbit under the symmetric group action $S_v$ for partitioning the vertex ground set, avoiding the need for explicit label permutations.
    \item \textbf{Sub-FHT Selection with Replacement:} For each distinct block size $s$ appearing $m_s$ times in the partition, we choose $m_s$ sub-FHT structures from the $f(s)$ available non-isomorphic classes of that specific size. Since multiple blocks share the exact same size $s$ and the choice is independent and unordered, this corresponds to selections with replacement (multisets). The number of ways to choose these sub-structures is given by the multiset coefficient:
    \[ \binom{f(s) + m_s - 1}{m_s} \]
\end{enumerate}
Multiplying these choices across all distinct block sizes $s$ present in $\lambda$, and summing the result over all valid partitions $\lambda \in \mathcal{P}(v)$, accounts for all possible non-isomorphic hierarchical compositions, thereby yielding the exact count $I(v)$.
\end{proof}

\subsection{FHT Inventory for \texorpdfstring{$v = 1$ to $4$}{v = 1 to 4}}

This subsection inventories all FHT structures for small vertex counts $v$, combining flat level 1 topologies and higher-level complexity structures using a unified format up to strict hypergraph isomorphism. Specifically, two FHT topologies $H_1 = (V_1, E_1)$ and $H_2 = (V_2, E_2)$ are considered isomorphic if and only if there exists a bijection between their vertex sets that preserves the exact collection and cardinality of every hyperedge, meaning that structures differing in their hyperedge profiles represent distinct topological classes.

\paragraph{Case $v = 1$ vertex:}
\begin{itemize}
    \item \(\mathcal{F}_1 : u_1 \text{ (Single Point)} : E = \{\{1\}\}\).
\end{itemize}

\paragraph{Case $v = 2$ vertices:}
\begin{itemize}
    \item \(\mathcal{F}_2 : E_2 \text{ (Binary Edge)} : E = \{\{1, 2\}\}\).
\end{itemize}

\paragraph{Case $v = 3$ vertices:}
\begin{itemize}
    \item \(\mathcal{F}_3 : K_3^{(3)} \text{ (Universal Triplet)} : E = \{\{1, 2, 3\}\}\).
    \item \(\mathcal{F}_4 : C_3 \text{ (Triangular Pair Cycle)} : E = \{\{1, 2\}, \{2, 3\}, \{1, 3\}\}\).
    \item \(\mathcal{F}_5 : P_3 \text{ (Line Graph Chain)} : E = \{\{1, 2\}, \{2, 3\}\}\).
    \item \(\mathcal{F}_6 : \text{Nested Hierarchy }  : E = \{ \mathcal{F}_2, K_3^{(3)} \}=\{\{1, 2\}, \{1, 2, 3\}\}\).
\end{itemize}

\paragraph{Case $v = 4$ vertices:}
\begin{itemize}
    \item \(\mathcal{F}_7 : K_4^{(4)} \text{ (Universal Hyperedge)} : E = \{\{1, 2, 3, 4\}\}\).
    \item \(\mathcal{F}_8 : \text{Triplet Bridged Pair} : E = \{\{1, 2, 3\}, \{3, 4\}\}\).
    \item \(\mathcal{F}_9 : \text{Intersecting Triplets Pair} : E = \{\{1, 2, 3\}, \{1, 2, 4\}\}\).
    \item \(\mathcal{F}_{10} : K_{1,3} \text{ (Star Graph Chained Pairs)} : E = \{\{1, 4\}, \{1, 3\}, \{1, 2\}\}\).
    \item \(\mathcal{F}_{11} : \text{Offset Triplet-Pairs Mixture} : E = \{\{2, 4\}, \{1, 3\}, \{1, 2\}\}\).
    \item \(\mathcal{F}_{12} : \text{Triplet-Doublet Pairs Mixture} : E = \{\{1, 3\}, \{2, 3, 4\}, \{1, 2\}\}\).
    \item \(\mathcal{F}_{13} : \text{Double Triplet-Pair Mixture} : E = \{\{2, 3, 4\}, \{1, 3, 4\}, \{1, 2\}\}\).
    \item \(\mathcal{F}_{14} : \text{Triplets Triad Maximal} : E = \{\{1, 2, 3\}, \{1, 3, 4\}, \{1, 2, 4\}\}\).
    \item \(\mathcal{F}_{15} : C_4 \text{ (Cyclic Network Pairs)} : E = \{\{1, 4\}, \{1, 3\}, \{2, 3\}, \{1, 2\}\}\).
    \item \(\mathcal{F}_{16} : \text{Triplet-Triad Network Pairs} : E = \{\{1, 4\}, \{1, 3\}, \{2, 3, 4\}, \{1, 2\}\}\).
    \item \(\mathcal{F}_{17} : \text{Diamond-Type Branch Pairs} : E = \{\{3, 4\}, \{2, 4\}, \{1, 3\}, \{1, 2\}\}\).
    \item \(\mathcal{F}_{18} : K_4^{(3)} \text{ (Tetrahedral Complete Triplets)} : E = \{\{1, 2, 3\}, \{2, 3, 4\}, \{1, 3, 4\}, \{1, 2, 4\}\}\).
    \item \(\mathcal{F}_{19} : \text{Partial Diamond Five-Pairs} : E = \{\{1, 4\}, \{2, 3\}, \{1, 2\}, \{2, 4\}, \{1, 3\}\}\).
    \item \(\mathcal{F}_{20} : K_4 \text{ (Complete Graph Six-Pairs)} : E = \{\{3, 4\}, \{1, 4\}, \{2, 3\}, \{1, 2\}, \{2, 4\}, \{1, 3\}\}\).
    \item \(\mathcal{F}_{21} : \text{Hierarchical Partition }  : E = \{ \mathcal{F}_2, \mathcal{F}_2, K_4^{(4)} \}= \{\{1, 2\}, \{3, 4\}, \{1, 2, 3, 4\}\}\).
    \item \(\mathcal{F}_{22} : \text{Asymmetric Hierarchy }  : E = \{ \mathcal{F}_2, K_4^{(4)} \}=\{\{1, 2\}, \{1, 2, 3, 4\}\}\).
    \item \(\mathcal{F}_{23} : \text{Tiered Universal }  : E = \{ \mathcal{F}_3, K_4^{(4)} \}=\{\{1, 2, 3\}, \{1, 2, 3, 4\}\}\).
    \item \(\mathcal{F}_{24} : \text{Tiered Chain }  : E = \{ \mathcal{F}_5, K_4^{(4)} \}=\{\{1, 2\}, \{2, 3\}, \{1, 2, 3, 4\}\}\).
    \item \(\mathcal{F}_{25} : \text{Tiered Nested }  : E =\{ \mathcal{F}_6, K_4^{(4)} \}= \{\{1, 2\}, \{1, 2, 3\}, \{1, 2, 3, 4\}\}\).
    \item \(\mathcal{F}_{26} : \text{Tiered Cyclic }  : E =\{ \mathcal{F}_4, K_4^{(4)} \} =\{\{1, 2\}, \{2, 3\}, \{1, 3\}, \{1, 2, 3, 4\}\}\).
\end{itemize}

\section{Symmetry Properties}
\subsection{Automorphisms, Invariant Points and Orbits}

To analyze the symmetries of a Fractal Hyper-Tree (FHT) structure, we define the core algebraic concepts governing its structural invariance.

\begin{definition}[FHT Automorphism, Orbits, and Invariant Points]
Let $H = (V, E)$ be an FHT structure.
\begin{enumerate}
    \item[(i)] \textbf{Automorphism and Group:} An automorphism of $H$ is a permutation $\sigma$ of the vertex set $V$ that preserves the hyperedge relations and hierarchical containment. The set of all such automorphisms forms a group under composition, denoted as $\operatorname{Aut}(H)$, which is isomorphic to a known abstract group.
    \item[(ii)] \textbf{Orbits:} Under the action of $\operatorname{Aut}(H)$, the vertex set $V$ is partitioned into disjoint equivalence classes called orbits. The orbit of a vertex $v \in V$ is defined as:
    $$\text{Orb}(v) = \{ \sigma(v) \mid \sigma \in \operatorname{Aut}(H) \}$$
    \item[(iii)] \textbf{Invariant Points:} Vertices that remain fixed under every automorphism ($\sigma(v) = v$ for all $\sigma \in \operatorname{Aut}(H)$) are called invariant points (or fixed points). The set of invariant points is denoted by $\operatorname{Inv}(H)$.
\end{enumerate}
\end{definition}

\subsection{Classification of FHT Structures by Symmetry Properties}

To systematically classify the foundational FHT structures, we categorize them based on their generic designation, automorphism group ($\operatorname{Aut}(H)$), number of invariant points ($|\operatorname{Inv}(H)|$), and total number of distinct vertex orbits ($k$), sorted by $v$ and the specified keys.

\begin{table}[H]
\centering
\small
\begin{tabular}{p{7.5cm} c c c}
\hline
\textbf{Generic Names} & \textbf{$\operatorname{Aut}(H)$ (Card.)} & \textbf{$|\operatorname{Inv}(H)|$} & \textbf{Orbits ($k$)} \\
\hline
$\mathcal{F}_1$ & $S_1$ (1) & 1 & 1 \\
\hline
$\mathcal{F}_2$ & $S_2$ (2) & 0 & 1 \\
\hline
$\mathcal{F}_5, \mathcal{F}_6$ & $S_2$ (2) & 1 & 2 \\
$\mathcal{F}_3, \mathcal{F}_4$ & $S_3$ (6) & 0 & 1 \\
\hline
$\mathcal{F}_{12}, \mathcal{F}_{16}$ & $S_1$ (1) & 4 & 4 \\
$\mathcal{F}_{11}, \mathcal{F}_{13}, \mathcal{F}_{15}, \mathcal{F}_{17}, \mathcal{F}_{19}$ & $S_2$ (2) & 0 & 2 \\
$\mathcal{F}_8, \mathcal{F}_{25}$ & $S_2$ (2) & 2 & 3 \\
$\mathcal{F}_9, \mathcal{F}_{22}$ & $S_2 \times S_2 \cong V_4$ (4) & 0 & 2 \\
$\mathcal{F}_{10}, \mathcal{F}_{23}, \mathcal{F}_{24}, \mathcal{F}_{26}$ & $S_3$ (6) & 1 & 2 \\
$\mathcal{F}_{21}$ & $D_4$ (8) & 0 & 1 \\
$\mathcal{F}_7, \mathcal{F}_{14}, \mathcal{F}_{18}, \mathcal{F}_{20}$ & $S_4$ (24) & 0 & 1 \\
\hline
\end{tabular}
\caption{Exhaustive classification of the 26 base FHT structures grouped and sorted by $v$, $|\operatorname{Aut}(H)|$, $|\operatorname{Inv}(H)|$, and number of orbits $k$.}
\label{tab:fht_automorphisms}
\end{table}

To clarify the algebraic structures appearing in our classification, we briefly define the group notations used:
\begin{itemize}
    \item \textbf{$S_v$ (Symmetric Group):} The group of all permutations on $v$ elements, with cardinality $v!$.
    \item \textbf{$D_n$ (Dihedral Group):} The symmetry group of order $2n$ combining rotations and reflections.
    \item \textbf{$V_4$ (Klein Four-Group):} The non-cyclic abelian group of order 4, corresponding to $C_2 \times C_2$.
\end{itemize}

By applying our four structural keys ($v$, $|\operatorname{Aut}(H)|$, $|\operatorname{Inv}(H)|$, and $k$), the 26 foundational FHT structures systematically condense into distinct symmetry classes. This reduction demonstrates that different hyper-tree configurations can share identical algebraic signatures, capturing the core symmetries governing small-scale FHT structures ($v \le 4$).

By analyzing the distribution of these symmetry classes across the number of vertices $v$, we observe a direct structural correlation with the total number of foundational FHT structures. While the total number of valid FHT configurations follows the sequence $1, 1, 4, 26, \dots$ for $v = 1, 2, 3, 4, \dots$, the condensation into distinct symmetry classes highlights how structural symmetries naturally cluster multiple raw FHT configurations into broader equivalence classes, significantly reducing the topological redundancy as the hierarchy scales.

\subsection{Enumeration of FHT Structures and OEIS Sequence Anticipation}
\label{sec:enumeration}

To study the combinatorial growth of Fractal Hyper-Tree (FHT) structures, we decompose the total count $f(v)$ of non-isomorphic FHT structures on $v$ vertices into two disjoint components: the flat level-$1$ topologies, enumerated by connected Sperner hypergraphs $S(v)$, and the hierarchical composite structures generated through non-trivial integer partitions, enumerated by $I(v)$.

Table~\ref{tab:fht_counts} summarizes the exact counts obtained up to $v = 5$ using our algorithmic implementation. The sequence of total structures $f(v)$ begins as follows:
$$1, 1, 4, 20, 187, \dots$$
This sequence grows rapidly due to the combinatorial explosion of both reduced hypergraph antichains and recursive multiset compositions. The sequence has been submitted to the On-Line Encyclopedia of Integer Sequences (OEIS) under the identifier A398931.

\begin{table}[htbp]
\centering
\begin{tabular}{ccccc}
\hline
Vertices ($v$) & Flat $S(v)$ & Hierarchical $I(v)$ & Total $f(v)$ & Cumulative Total \\ \hline
1 & 1 & 0 & 1 & 1 \\
2 & 1 & 0 & 1 & 2 \\
3 & 3 & 1 & 4 & 6 \\
4 & 14 & 6 & 20 & 26 \\
5 & 157 & 30 & 187 & 213 \\ \hline
\end{tabular}
\caption{Enumeration of non-isomorphic FHT structures up to $v=5$.}
\label{tab:fht_counts}
\end{table}

\noindent\textbf{Computational Framework and Implementation:} 
To execute the hybrid generation, enumeration, and symmetry analysis framework, the computational model is implemented in Python. It relies directly on optimized bitmask operations for connectivity validation and utilizes \textbf{\texttt{pynauty}} for rigorous canonical labeling, hypergraph isomorphism testing, and automorphism group extraction (via \texttt{autgrp}) to compute vertex orbits and invariant points. Combinatorial operations—such as generating integer partitions, handling multisets, and computing recurrence coefficients—are processed using dedicated algorithmic modules. The complete source code implementing these symmetry extensions (v2.0.0) is publicly available on GitHub and archived on Zenodo~\citep{zenodo_fht_v3}.

\section{Global Ultrametric}
\subsection{Lemma: Hierarchical Tree Structure and Filiation}

\begin{lemma}[Hierarchical Tree Structure of FHT]
Let \(\mathcal{F}\) be a Finite Fractal Hyper-Tree defined by axioms (i) to (vi). The set of all hypergraphs \(\mathcal{H} = \bigcup_{n=0}^N \mathcal{H}_n\), ordered by vertex set inclusion, forms a finite rooted tree where edges represent the direct parent-child relationships. 

Furthermore, the following properties hold:
\begin{enumerate}
    \item \textbf{Minimal Connection Level:} For any two elements \(H', H'' \in \mathcal{H}\), their lowest common ancestor defines a unique minimal connection level, denoted by:
    $$k(H', H'') = \min \{ k \in \{1, \dots, N\} \mid \exists H_{anc} \in \mathcal{H}_k \text{ such that } V_{H'} \subseteq V_{H_{anc}} \text{ and } V_{H''} \subseteq V_{H_{anc}} \}$$
    \item \textbf{Filiation (Parents and Children):} 
    \begin{itemize}
        \item For any intermediate level \(n \in \{1, \dots, N-1\}\), every hypergraph \(H \in \mathcal{H}_n\) has a \textbf{unique parent} in \(\mathcal{H}_{n+1}\) and \textbf{at least two children} in lower levels (each belonging to the maximal set \(C\) for the hyperedges containing them).
        \item \textbf{Exceptions:} The top-level root \(H_{N,1} \in \mathcal{H}_N\) has no parent. The base-level elements (leaves) \(H_{0,i} \in \mathcal{H}_0\) have no children.
    \end{itemize}
\end{enumerate}
\end{lemma}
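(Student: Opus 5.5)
The central step is a laminarity claim: for any two hypergraphs $H' \in \mathcal{H}_k$ and $H \in \mathcal{H}_n$ with $k<n$, either $V_{H'} \subsetneq V_H$ or $V_{H'} \cap V_H = \emptyset$. To prove it, suppose $V_{H'} \cap V_H \neq \emptyset$ and pick a vertex $x$ in the intersection. The top-level component $H$ consists of its connected hyperedges; by axiom (i) every vertex of $V_H$ lies in some hyperedge $e \in E_H$, and I take $V_H$ to equal the union of its hyperedges. Choose $e \ni x$. Axiom (iv) then forces $V_{H'} \subsetneq e \subseteq V_H$. For two hypergraphs on the same level $n$, axiom (i) states that they are disjoint connected components, so their vertex sets are disjoint or equal. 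Consequently the family $\{V_H : H \in \mathcal{H}\}$ is laminar, apart from possible repetitions of the same vertex set across levels.

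Next I would show that repetition across levels cannot occur. Suppose $V_{H'} = V_H$ with $k<n$. By axiom (vi), applied to a hyperedge $e \subseteq V_H$ that contains a vertex of $V_{H'}$, axiom (iv) gives $V_{H'} \subsetneq e \subseteq V_H$, which is a contradiction. Hence inclusion is strict, and the inclusion order on $\mathcal{H}$ is a partial order whose elements are distinct sets.

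For each $H \neq H_{N,1}$, the set of strictly larger elements contains $H_{N,1}$ by axiom (ii), so it is nonempty. Laminarity makes this set a chain, since any two supersets of $V_H$ intersect and are therefore comparable. The minimum of that chain is the unique parent. A poset in which every non-maximal element has a unique cover and there is a single maximum is a finite rooted tree, with leaves among the singletons of axiom (iii).

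The minimal connection level $k(H',H'')$ is well defined because $H_{N,1}$ belongs to the defining set and the index set is finite. The minimizing ancestor is unique because, by axiom (i), at most one component of a given level contains a given vertex.

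For filiation, the statement that an intermediate $H$ has its parent in $\mathcal{H}_{n+1}$ is the step I expect to be the main obstacle. Axioms (iv) and (v) only guarantee that the next strictly larger element lies at some higher level. The approach is as follows. For $x \in V_H$, let $H^{+}$ be the component of $\mathcal{H}_{n+1}$ containing $x$. This component exists if every level covers $V$, which I would argue from axiom (v) applied downward from the top combined with axiom (i); if that fails, I would read "parent in $\mathcal{H}_{n+1}$" as the tree parent. By laminarity, $V_H \subsetneq V_{H^{+}}$, and any intermediate element would have to sit at level $n$ or $n+1$, which the disjointness within a level excludes.

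For the children, take any $e \in E_H$. Axiom (vi) gives at least two maximal lower hypergraphs inside $e$, and these are contained in $V_H$. A maximal element below $V_H$ that contains each of them is a child of $H$. If both lie under the same child $K$, then $e \cap V_K \neq \emptyset$ and axiom (iv) gives $V_K \subsetneq e$, which contradicts their maximality in $e$. Hence there are two distinct children.

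The leaves have no children because singletons contain no strictly smaller nonempty sets, and the root has no parent because it is the maximum.
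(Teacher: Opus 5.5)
Your laminarity route differs from the paper's and is more careful. You show that any two members of $\mathcal{H}$ are either disjoint or strictly nested, with the lower level inside the higher. The chain of strict supersets then gives the rooted tree, the minimal connection level and the two exceptions. Via axioms (vi) and (iv) it also gives two distinct children. None of this assumes anything about how consecutive levels fit together. The paper instead obtains parents by reading axiom (v) as an upward condition, and obtains acyclicity from a loose appeal to axiom (i).

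The genuine gap is the step you flagged yourself: coverage of $V$ by every level cannot be derived from axiom (v) ``applied downward''. Axiom (v) only says that each $H\in\mathcal{H}_n$ strictly contains, inside one of its own edges, \emph{some} member of $\mathcal{H}_{n-1}$. It gives no control over whether any member of $\mathcal{H}_{n+1}$ meets $V_H$.

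In fact the claim ``parent in $\mathcal{H}_{n+1}$'' is false unless axiom (i) is read as saying that the components of each level partition all of $V$. Take $V=\{1,\dots,5\}$ with the singleton base level and
\begin{itemize}
\item $\mathcal{H}_1=\{(\{1,2\},\{\{1,2\}\}),\,(\{3,4\},\{\{3,4\}\})\}$,
\item $\mathcal{H}_2=\{(\{3,4,5\},\{\{3,4,5\}\})\}$,
\item $\mathcal{H}_3=\{(V,\{V\})\}$.
\end{itemize}
This is the natural level structure of the sum of $\mathcal{F}_2$ on $\{1,2\}$ and a copy of $\mathcal{F}_6$ on $\{3,4,5\}$. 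Axioms (i)--(v) are immediate. The sets $C$ of axiom (vi) have vertex sets $\{1\},\{2\}$; $\{3\},\{4\}$; $\{3,4\},\{5\}$; and $\{1,2\},\{3,4,5\}$ respectively. Yet $\{1,2\}\in\mathcal{H}_1$ meets no member of $\mathcal{H}_2$, and its tree parent is the root in $\mathcal{H}_3$.

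So what is missing is a hypothesis, not an argument. If (i) is read as a partition of $V$ at each level, coverage follows from (i) alone. Your laminarity step (nothing can lie strictly between levels $n$ and $n+1$) then finishes the proof. Otherwise your fallback, which replaces ``in $\mathcal{H}_{n+1}$'' by ``the tree parent'', proves a weaker statement than the one given. The paper's proof has the same hole. It asserts, without justification, that $V_H$ lies in a unique connected component at level $n+1$ because the hypergraphs at that level partition the components.
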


\begin{proof}
We proceed by establishing the structural properties derived from the axioms:

\begin{enumerate}
    \item \textbf{Existence and Uniqueness of Parents (Filiation):}
    By Axiom (v) (\textit{Hierarchical Filiation}), every hypergraph \(H \in \mathcal{H}_n\) ($n \ge 1$) is contained within at least one hyperedge of a higher level. By Axiom (iv) (\textit{Fractal Inclusion}), if a hyperedge at level \(n+1\) intersects the vertex set of \(H\), it must strictly contain \(V_H\). Since the hypergraphs at level \(n+1\) partition the connected components at that level, \(V_H\) is fully contained within a unique connected component at level \(n+1\), which defines a unique parent hypergraph in \(\mathcal{H}_{n+1}\). 
    
    For the root \(H_{N,1} \in \mathcal{H}_N\), by Axiom (ii), its vertex set is the entire space \(V\), meaning it cannot be strictly contained in any higher-level structure, hence it has no parent.

    \item \textbf{Children and Non-Triviality:}
    Conversely, for any \(H \in \mathcal{H}_n\) ($n \ge 1\) and any hyperedge \(e \in E_H\), Axiom (vi) (\textit{Hyperedge Non-Triviality}) guarantees that the set \(C\) of maximal lower-level hypergraphs contained in \(e\) satisfies \(|C| \ge 2\). This means every non-leaf hypergraph possesses at least two children. The base elements in \(\mathcal{H}_0\) consist of singleton vertex sets with trivial hyperedges (Axiom iii), possessing no lower-level structures, hence they form the leaves of the tree.

    \item \textbf{Connectedness and Acyclicity (Tree Property):}
    Axiom (i) states that the graded family forms a single connected hierarchical structure. Combined with the strict inclusion property (\(V_{H'} \subsetneq V_{H''}\) for descendant-ancestor relations) and the finiteness of the levels from \(0\) to \(N\), cycles are strictly impossible. 

    \item \textbf{Well-defined Minimal Connection Level \(k(H', H'')\):}
    Since the structure is a connected finite tree rooted at \(H_{N,1}\), any two nodes \(H'\) and \(H''\) share at least one common ancestor (the root itself at level \(N\)). The set of common ancestor levels is a non-empty finite subset of \(\{1, \dots, N\}\), and thus its minimum \(k(H', H'')\) is always well-defined and unique.
\end{enumerate}
\end{proof}
\subsection{Hierarchical Ultrametric}

\begin{definition}[FHT Scaling Factors and Hierarchical Distance]
Let \(\mathcal{F}\) be a Finite Fractal Hyper-Tree family satisfying Axioms (i) through (vi), and let \(\mathcal{H} = \bigcup_{n=0}^N \mathcal{H}_n\) be the set of all its hypergraphs. 
\begin{enumerate}
    \item[(i)] \textbf{Scaling Coefficients (\(S_n\)):} For each level \(n \in \{0, \dots, N\}\), 
    $$S_n = \frac{1}{\Delta_n + 1}$$
    where \(\Delta_n = \max_{H \in \mathcal{H}_n} \operatorname{diam}(H)\) is the maximum level diameter induced by the path metric.
    \item[(ii)] \textbf{Hierarchical Distance (\(D\)):} The mapping \(D : \mathcal{H} \times \mathcal{H} \to \mathbb{R}\) is defined as:
    $$D(H, H') = \begin{cases} 
    0 & \text{if } H = H' \\ 
    \left( \prod_{i=0}^{k-1} S_i \right) \cdot \frac{\delta_k(H, H')}{1 + \delta_k(H, H')} & \text{if } H \neq H'
    \end{cases}$$
    where \(k\) denotes their lowest common level, and \(\delta_k(H, H')\) is the distance induced by the path metric at level \(k\).
\end{enumerate}
\end{definition}

\subsection{Theorem: Hierarchical Ultrametric Space Structure}

\begin{theorem}[Hierarchical Ultrametric Space Structure]
Let \(\mathcal{F}\) be an FHT family satisfying Axioms (i) through (vi). Then the metric space \((\mathcal{H}, D)\) forms an \textbf{ultrametric space}.
\end{theorem}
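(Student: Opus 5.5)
The plan is to check the ultrametric axioms for $D$ directly as defined, using the rooted-tree structure from the Lemma on the Hierarchical Tree Structure of FHT. The easy axioms come first. $D(H,H)=0$ by definition. For $H\neq H'$, every $S_i=1/(\Delta_i+1)$ is positive. Moreover $\delta_k(H,H')\ge 1$ whenever the two nodes are distinct at their lowest common level $k$, so $\delta_k/(1+\delta_k)\ge 1/2$ and $D(H,H')>0$. Symmetry follows because $k(H,H')=k(H',H)$ (the definition of the minimal connection level is symmetric) and $\delta_k$ is a path metric, hence symmetric. Everything therefore reduces to the strong triangle inequality
\[
D(H_1,H_3)\le \max\{D(H_1,H_2),\,D(H_2,H_3)\}.
\]

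For this I will use the standard three-point property of lowest common ancestors in a rooted tree, which the Lemma provides. Among $k_{12}=k(H_1,H_2)$, $k_{23}=k(H_2,H_3)$ and $k_{13}=k(H_1,H_3)$, the two largest are equal. I then write $D(H,H')=P_k\,\phi(\delta_k)$, with $P_k=\prod_{i=0}^{k-1}S_i$ and $\phi(t)=t/(1+t)$. Two facts do the work. First, $P_k$ is non-increasing in $k$ and drops by exactly the factor $S_k=1/(\Delta_k+1)$ from $P_k$ to $P_{k+1}$. Second, $\phi(\delta_k)\in[\tfrac12,\ \Delta_k/(\Delta_k+1)]$, since $1\le\delta_k\le\Delta_k$. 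The argument splits into the case where the three levels are all equal and the case where exactly one level differs from the other two.

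In the case of unequal levels, I compare the prefactors using these bounds. The aim is to show that the pair whose distance must be dominated sits at a level whose maximal contribution $P_k\,\Delta_k/(\Delta_k+1)$ does not exceed the minimal contribution $P_{k'}/2$ at the competing level $k'$. If this comparison goes the wrong way, so that the decreasing product makes higher common levels give \emph{smaller} distances, the proof must at that point read ``lowest common level'' as the level indexing in the Definition intends. I would first check this orientation on the smallest imbricated example, $\mathcal{F}_6$.

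In the case of equal levels, the inequality reduces to $\phi(\delta_k(H_1,H_3))\le\max\{\phi(\delta_k(H_1,H_2)),\phi(\delta_k(H_2,H_3))\}$. Since $\phi$ is increasing, this is equivalent to $\delta_k$ itself being ultrametric on the relevant nodes.

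I expect this equal-level case to be the main obstacle, because a path metric is in general not ultrametric. The approach I would try first is to use Axiom (iv) together with Axiom (vi). Fractal inclusion forces every hyperedge at level $k$ to contain whole children, and non-triviality forces each such hyperedge to contain at least two maximal children. From these I would try to show that the children of a common ancestor are at pairwise $\delta_k$-distance $1$, i.e.\ that they are covered together by a hyperedge. In that situation $\phi$ is constant on them and the inequality holds with equality. If this fails, I will exhibit the exact hypothesis under which the inequality does hold, and then check that hypothesis against the flat examples in the inventory.
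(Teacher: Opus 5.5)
Your proposal does not close, and it cannot. Both contingencies you flag actually occur, so the statement is false as written. Your two fallbacks (re-reading ``lowest common level'', or isolating ``the exact hypothesis'' under which the inequality holds) would prove a different theorem.

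Run the orientation check you propose on $\mathcal{F}_6$. Its hierarchy is $\mathcal{H}_0=\{\{1\},\{2\},\{3\}\}$, $\mathcal{H}_1=\{(\{1,2\},\{\{1,2\}\})\}$, $\mathcal{H}_2=\{(\{1,2,3\},\{\{1,2,3\}\})\}$. So $S_0=1$, $S_1=\tfrac12$, and every $\phi(\delta)$ equals $\tfrac12$. Hence $D(\{1\},\{2\})=\tfrac12$ while $D(\{1\},\{3\})=D(\{2\},\{3\})=\tfrac14$, and the strong triangle inequality fails. This is not a quirk of one example. The Lemma fixes level $0$ at the leaves and level $N$ at the root. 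The three-point property you cite yields $k_{13}\le\max(k_{12},k_{23})$, and together with a non-increasing $P_k$ this bounds $D(H_1,H_3)$ from \emph{below}. Indeed, when $k_{13}<k_{12}=k_{23}$, your own bounds give $D(H_1,H_3)\ge\tfrac12 P_{k_{13}}\ge P_{k_{13}+1}\ge P_{k_{12}}>\max\{D(H_1,H_2),D(H_2,H_3)\}$, using $S_k\le\tfrac12$ for $k\ge1$ and $\phi<1$. This configuration occurs in every FHT with $N\ge2$: take two leaves of a level-$1$ node and a leaf outside it.

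The equal-level case fails independently of orientation. The lemma you hope for, that children of a common ancestor lie at pairwise $\delta_k$-distance $1$, is false. At level $1$ every lower component is a singleton, so Axiom (iv) is vacuous there. Axiom (vi) only asks each edge to contain two maximal children, not that every two children share an edge. For $\mathcal{F}_5=P_3$ (where $N=1$) one gets $D(\{1\},\{3\})=\tfrac23>\tfrac12=D(\{1\},\{2\})=D(\{2\},\{3\})$. The matrix the paper itself lists for $\mathcal{M}_3$ contains the triangle $(1,1,2)$, whose longest side is unique. Your positivity step also assumes $\delta_k\ge1$ for an ancestor--descendant pair, but the ``level-$k$ path distance'' between a node and its own ancestor is undefined or $0$.

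The paper's proof does not supply a missing idea that would repair this. It infers $k_{13}\ge\min(k_{12},k_{23})$ from the fact that two of the levels coincide. That inference fails precisely when $k_{13}<k_{12}=k_{23}$, e.g.\ $\{1\},\{2\}$ against $\{3\}$ in $\mathcal{F}_6$. It then treats $D$ as a function of $k$ alone, discarding $\delta_k$.

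What your three-point argument does establish is that $D(H,H')=h(k(H,H'))$ is an ultrametric for any positive $h$ strictly increasing on $\{1,\dots,N\}$, for instance $h(k)=\prod_{i=k}^{N}S_i$. Obtaining an ultrametric therefore requires changing $D$ in both respects: reversing the orientation and dropping $\delta_k$.
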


\begin{proof}
To prove that \((\mathcal{H}, D)\) is an ultrametric space, we verify the metric axioms and the strong triangle (ultrametric) inequality for any three elements \(H_1, H_2, H_3 \in \mathcal{H}\):

\begin{enumerate}
    \item \textbf{Non-negativity and Identity of Indiscernibles:}
    If \(H_1 = H_2\), \(D(H_1, H_2) = 0\) by definition. If \(H_1 \neq H_2\), their lowest common level \(k\) yields a strictly positive scaling product and a strictly positive fractional term (since the path metric distance \(\delta_k > 0\)), ensuring \(D(H_1, H_2) > 0\). Thus, \(D(H_1, H_2) = 0 \iff H_1 = H_2\).

    \item \textbf{Symmetry:}
    The lowest common level \(k\) between two hypergraphs and the local path metric distance \(\delta_k(H_1, H_2)\) are symmetric with respect to the input arguments, directly yielding \(D(H_1, H_2) = D(H_2, H_1)\).
    
    \item \textbf{Strong Triangle Inequality (Ultrametric Inequality):}
    For any three hypergraphs \(H_1, H_2, H_3 \in \mathcal{H}\), let their respective lowest common levels be \(k_{12} = k(H_1, H_2)\), \(k_{23} = k(H_2, H_3)\), and \(k_{13} = k(H_1, H_3)\). 
    
    We must show that \(D(H_1, H_3) \le \max\left(D(H_1, H_2), D(H_2, H_3)\right)\). 
    
    Let us proceed by contradiction. Suppose that all three levels are strictly distinct. Without loss of generality, we can order them such that:
    $$k_{12} < k_{23} < k_{13}$$
    By definition of \(k_{12}\), the lowest common ancestor of \(H_1\) and \(H_2\) occurs at level \(k_{12}\). Since \(k_{12} < k_{23}\), the path from \(H_1\) to their common ancestor passes through level \(k_{12}\) well before reaching any ancestor common to \(H_2\) and \(H_3\) (which only meet at the higher level \(k_{23}\)). 
    
    However, because \(H_2\) is shared between the pair \((H_1, H_2)\) and the pair \((H_2, H_3)\), the ancestor of \(H_1\) and \(H_2\) at level \(k_{12}\) must also contain \(H_2\). By the strict inclusion and partitioning properties established in our Lemma, any higher-level ancestor containing \(H_2\) must encapsulate all lower structures containing it. Thus, the common ancestor of \(H_2\) and \(H_3\) at level \(k_{23}\) must necessarily be an ancestor of \(H_1\) as well (since \(H_1\) is already connected to \(H_2\) at the lower level \(k_{12}\)). 
    
    This implies that \(H_1\) and \(H_3\) also share this ancestor at level \(k_{23}\) (or lower), meaning that their lowest common level \(k_{13}\) cannot exceed \(k_{23}\), contradicting our initial assumption that \(k_{23} < k_{13}\). 
    
    Therefore, it is impossible for all three levels to be distinct; at least two must be equal, forcing:
    $$k_{13} \ge \min(k_{12}, k_{23})$$
    Since the distance function \(D\) is non-increasing with respect to the level index \(k\) (due to the recursive product of scaling factors \(S_i < 1\)), a higher or equal level index yields a smaller or equal distance:
    $$D(H_1, H_3) \le \max\left(D(H_1, H_2), D(H_2, H_3)\right)$$
    This confirms that \((\mathcal{H}, D)\) satisfies the strong triangle inequality.
\end{enumerate}
\end{proof}
\begin{definition}[Metric Equivalence and Macro-States]
Let $(\mathcal{H}, D)$ be the global hierarchical ultrametric space of FHT structures. We define an equivalence relation $\sim_M$ on $\mathcal{H}$ such that two structures $H_1, H_2 \in \mathcal{H}$ are metric-equivalent ($H_1 \sim_M H_2$) if they share identical distance profiles relative to the rest of the metric space, or equivalently, if they yield the same effective scaling and path-metric signatures at their operational level:
$$H_1 \sim_M H_2 \iff \forall X \in \mathcal{H}, \quad D(H_1, X) = D(H_2, X)$$
The quotient space $\mathcal{M} = \mathcal{H} / \sim_M$ defines the set of \textbf{metric macro-states}, where distinct micro-structures are coarse-grained into a single macroscopic equivalence class based on their exact metric profile rather than superficial diameter bounds.
\end{definition}

\subsection{Metric Classes and Matrix Representations for FHT Structures}

To illustrate the macro-states and metric equivalence classes, we index the complete set of generated Fractal Hyper-Trees up to size \(v=4\) sequentially as \(\mathcal{F}_1, \dots, \mathcal{F}_{26}\), corresponding to the cumulative enumeration values \(f(1)=1\) (\(\mathcal{F}_1\)), \(f(2)=1\) (\(\mathcal{F}_2\)), \(f(3)=4\) (\(\mathcal{F}_3\) through \(\mathcal{F}_6\)), and \(f(4)=20\) (\(\mathcal{F}_7\) through \(\mathcal{F}_{26}\)). Table~\ref{tab:fht_metric_classes} groups these 26 structures into 10 distinct metric isometry classes (\(\mathcal{M}_0\) through \(\mathcal{M}_9\)) based on their internal vertex distance matrices \(\delta\) and resulting scaling factors.

\begin{table}[H]
\centering
\footnotesize
\setlength{\tabcolsep}{5pt}
\renewcommand{\arraystretch}{1.4}
\begin{tabular}{c l c c l}
\hline
\textbf{Class} & \textbf{Included FHT Structures} & \textbf{Diameter ($\Delta$)} & \textbf{Scale Factor ($S$)} & \textbf{Vertex Distance Matrix ($\delta$)} \\
\hline
$\mathcal{M}_0$ & $\mathcal{F}_1$ ($v=1$) & $0$ & $1$ & $[0]$ \\
\hline
$\mathcal{M}_1$ & $\mathcal{F}_2$ ($v=2$) & $1$ & $\frac{1}{2}$ & $\begin{pmatrix} 0 & 1 \\ 1 & 0 \end{pmatrix}$ \\
\hline
$\mathcal{M}_2$ & $\mathcal{F}_3, \mathcal{F}_4, \mathcal{F}_6$ ($v=3$) & $1$ & $\frac{1}{2}$ & $\begin{pmatrix} 0 & 1 & 1 \\ 1 & 0 & 1 \\ 1 & 1 & 0 \end{pmatrix}$ \\
\hline
$\mathcal{M}_3$ & $\mathcal{F}_5$ ($v=3$) & $2$ & $\frac{1}{3}$ & $\begin{pmatrix} 0 & 1 & 2 \\ 1 & 0 & 1 \\ 2 & 1 & 0 \end{pmatrix}$ \\
\hline
$\mathcal{M}_4$ & $\mathcal{F}_7, \mathcal{F}_{14}, \mathcal{F}_{18}, \mathcal{F}_{20}, \mathcal{F}_{21}, \mathcal{F}_{22}, \mathcal{F}_{23}, \mathcal{F}_{24}, \mathcal{F}_{25}, \mathcal{F}_{26}$ ($v=4$) & $1$ & $\frac{1}{2}$ & $\begin{pmatrix} 0 & 1 & 1 & 1 \\ 1 & 0 & 1 & 1 \\ 1 & 1 & 0 & 1 \\ 1 & 1 & 1 & 0 \end{pmatrix}$ \\
\hline
$\mathcal{M}_5$ & $\mathcal{F}_8, \mathcal{F}_{15}$ ($v=4$) & $2$ & $\frac{1}{3}$ & $\begin{pmatrix} 0 & 1 & 1 & 1 \\ 1 & 0 & 1 & 2 \\ 1 & 1 & 0 & 2 \\ 1 & 2 & 2 & 0 \end{pmatrix}$ \\
\hline
$\mathcal{M}_6$ & $\mathcal{F}_9, \mathcal{F}_{13}, \mathcal{F}_{17}$ ($v=4$) & $2$ & \(\frac{1}{3}\) & $\begin{pmatrix} 0 & 1 & 1 & 1 \\ 1 & 0 & 1 & 1 \\ 1 & 1 & 0 & 2 \\ 1 & 1 & 2 & 0 \end{pmatrix}$ \\
\hline
$\mathcal{M}_7$ & $\mathcal{F}_{10}, \mathcal{F}_{16}, \mathcal{F}_{19}$ ($v=4$) & $2$ & \(\frac{1}{3}\) & $\begin{pmatrix} 0 & 1 & 1 & 1 \\ 1 & 0 & 2 & 2 \\ 1 & 2 & 0 & 2 \\ 1 & 2 & 2 & 0 \end{pmatrix}$ \\
\hline
$\mathcal{M}_8$ & $\mathcal{F}_{12}$ ($v=4$) & $2$ & \(\frac{1}{3}\) & $\begin{pmatrix} 0 & 1 & 1 & 2 \\ 1 & 0 & 2 & 1 \\ 1 & 2 & 0 & 1 \\ 2 & 1 & 1 & 0 \end{pmatrix}$ \\
\hline
$\mathcal{M}_9$ & $\mathcal{F}_{11}$ ($v=4$) & $3$ & \(\frac{1}{4}\) & $\begin{pmatrix} 0 & 1 & 1 & 2 \\ 1 & 0 & 2 & 1 \\ 1 & 2 & 0 & 3 \\ 2 & 1 & 3 & 0 \end{pmatrix}$ \\
\hline
\end{tabular}
\caption{Classification of all 26 FHT structures into 10 metric isometry classes based on complete internal vertex distance matrices.}
\label{tab:fht_metric_classes}
\end{table}

\noindent\textbf{Computational Framework and Metric Analysis:} 
To execute the metric isometry classification and canonical distance matrix normalization, the analytical pipeline is implemented within version 3.0.0 of the Python computational framework. It combines optimized primal graph construction and all-pairs shortest path algorithms with a complete vertex-permutation normalization routine to verify the unique macro-states (\(\mathcal{M}_0\) through \(\mathcal{M}_9\)) and their corresponding scaling factors. The complete source code implementing these metric extensions (v3.0.0) is publicly available on GitHub and archived on Zenodo~\citep{zenodo_fht_v3}.

\section{Discussion and Asymptotic Perspectives}

To transition from finite enumerations to structural limits, this section investigates the asymptotic behavior of FHT configurations as the vertex count $v \to \infty$, introduces formal vertex substitution operations, and studies infinite families of recursively defined imbricated FHTs.

\subsection{Vertex Substitution and Composition Operations}

To construct large-scale structures algebraically without exhaustive enumeration, we define a formal substitution operator that injects a motif FHT into every vertex of a host FHT.

\begin{theorem}[FHT Preservation under Vertex Substitution]\label{thm:substitution_fht}
Let $H_{host}$ be a Finite Fractal Hyper-Tree and $\mathcal{F}_{motif}$ be another Finite Fractal Hyper-Tree. The structure resulting from the vertex substitution $H_{host} \star \mathcal{F}_{motif}$ satisfies all six defining axiomatic conditions of a Finite Fractal Hyper-Tree.
\end{theorem}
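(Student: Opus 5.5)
I will first fix a precise form of the operator, since the statement leaves $\star$ implicit. For each host vertex $x \in V_{host}$ take a disjoint copy $\mathcal{F}^{(x)}$ of $\mathcal{F}_{motif}$ on a vertex set $W_x$, and set $V = \bigsqcup_x W_x$. The edge set is the union of the motif edges inside each copy (with singleton edges removed unless the motif has one vertex) together with the \emph{blown-up} host edges $\hat e = \bigcup_{x \in e} W_x$ for $e \in E_{host}$. The graded family is built by stacking. Levels $0,\dots,N_{motif}$ are the disjoint unions of the corresponding levels of the copies $\mathcal{F}^{(x)}$. Levels $N_{motif}+n$ for $1 \le n \le N_{host}$ are obtained by replacing each host hypergraph $H_{n,i}$ with $\hat H_{n,i} = (\bigcup_{x\in V_{n,i}} W_x, \{\hat e : e \in E_{n,i}\})$. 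The total depth is $N_{host}+N_{motif}$. The level-$N_{motif}$ blocks $W_x$ then play exactly the role that the host's base singletons played. This is the same mechanism as in the sum operator and in the FHT Structure Preservation property: the substitution is a level-wise sum indexed by the host.

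I then check the axioms in order, each by transporting the host or motif axiom along the map $x \mapsto W_x$, which preserves and reflects intersection and inclusion.

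\textbf{Axioms (ii) and (iii).} Both are immediate. The top level is $\hat H_{N,1}$ on all of $V$. Its connectedness follows because $\hat e \cap \hat f \ne \emptyset \iff e \cap f \ne \emptyset$, so host edge-paths lift to edge-paths. The base level is the union of the motifs' base levels.

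\textbf{Axiom (i).} On the lower levels, the components are those of the copies, which are disjoint. On the upper levels, the same intersection equivalence shows that components of $\hat H$ correspond bijectively to components of the host.

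\textbf{Axioms (v) and (vi).} Within a copy these are inherited from the motif. On the upper levels, if $V_{H'} \subsetneq e$ in the host, then $\hat V_{H'} \subsetneq \hat e$, because $x\mapsto W_x$ is injective on subsets. Filiation and the children count $|C|\ge 2$ therefore transfer. The one exception is the first upper level, where "children" of level-$N_{host}$ host leaves become motif roots $W_x$. Here I must check that maximal elements below $\hat e$ are exactly the blocks $\hat V_{H'}$ with $H'$ maximal below $e$ in the host. This holds because anything strictly inside $\hat e$ coming from a copy lies in a single $W_x$.

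\textbf{Axiom (iv).} I expect this to be the main obstacle. Take $\hat e$ at an upper level and $H'$ at a lower level.
\begin{itemize}
\item If $H'$ lies in a copy, then $V_{H'} \subseteq W_x$ for some $x$. If $\hat e$ meets it, then $x \in e$, so $V_{H'} \subseteq W_x \subseteq \hat e$. Strictness can fail only when $\hat e = W_x$, which means $e=\{x\}$. That is excluded for $N_{host}\ge 1$ by the removal of singleton edges, though it needs care for host levels whose edges are singletons.
\item If $H'$ is itself an upper-level block, the host axiom (iv) transfers directly.
\end{itemize}
The delicate sub-case is when the motif is a single point or the host is a single point. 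There the construction degenerates, either to the host itself or to the motif itself. I would treat these separately, and otherwise assume $|V_{host}|,|V_{motif}|\ge 2$.
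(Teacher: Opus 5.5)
Your proposal is correct and follows the same route as the paper. Both verify Axioms (i)--(vi) one by one by inheritance from the motif copies and the lifted host edges, with each motif root taking over the role of a host leaf; where the paper only asserts inheritance, you fix an explicit construction (blocks $W_x$, lifted edges $\hat e$, levels shifted by $N_{motif}$) and actually carry out the case analysis for (iv) and the identification of maximal children in (vi).

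Two small tightenings. First, a host edge $e=\{x\}$ at a level $\ge 1$ is already excluded by the host's own Axiom (iv) applied to the base singleton $\{x\}$, so your ``needs care'' sub-case is vacuous. Second, the correspondence between copy roots $W_x$ and host leaves $\{x\}$ enters (vi) at every upper level, not only the first (e.g.\ vertex $3$ in $\mathcal{F}_6$ is a maximal child of the level-$2$ edge); your argument that anything from a copy lies in a single $W_x$ already handles this uniformly.
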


\begin{proof}
We verify each of the six axiomatic conditions for the composite structure $\mathcal{F}_{new} = H_{host} \star \mathcal{F}_{motif}$:
\begin{itemize}
    \item[\textbf{(i)}] \textbf{Partition by Connected Components:} The new vertex ground set $V_{new}$ is formed by the disjoint union of vertex sets of each motif copy. The path metric and adjacency relations are inherited locally from $\mathcal{F}_{motif}$ within each substituted vertex and extended across components via the lifted edges of $H_{host}$, ensuring that the graded family $\mathcal{H}_n$ correctly partitions into disjoint connected components.
    
    \item[\textbf{(ii)}] \textbf{Top-Level Singularity and Connectedness:} The top-level structure of $H_{host}$ provides a global encompassing hyperedge. Combined with the top-level connectivity of each motif copy, the highest level of $\mathcal{F}_{new}$ consists of a single connected hypergraph spanning the entire vertex space $V_{new}$.
    
    \item[\textbf{(iii)}] \textbf{Base Level Structure:} The base level $\mathcal{H}_0$ of $\mathcal{F}_{new}$ is formed by the union of the base levels of all instantiated motif copies. Each base element is a singleton vertex with its trivial hyperedge, and their union precisely covers $V_{new}$.
    
    \item[\textbf{(iv)}] \textbf{Fractal Inclusion:} By construction, any hyperedge in $\mathcal{F}_{new}$ is either an internal hyperedge of a motif copy (inheriting fractal inclusion from $\mathcal{F}_{motif}$) or a lifted edge corresponding to $H_{host}$. Because $H_{host}$ and $\mathcal{F}_{motif}$ both satisfy strict fractal inclusion, any intersection between a hyperedge and a lower-level component results in strict containment ($V_{H'} \subsetneq e$).
    
    \item[\textbf{(v)}] \textbf{Hierarchical Filiation:} Every non-root hypergraph component at level $n$ maps to a unique parent structure. For components within a motif copy, the parent relation is preserved internally; for the root of each motif copy, its parent transitions to the corresponding higher-level hierarchy dictated by the host structure $H_{host}$, satisfying the unique filiation condition.
    
    \item[\textbf{(vi)}] \textbf{Hyperedge Non-Triviality:} Since both $H_{host}$ and $\mathcal{F}_{motif}$ satisfy the non-triviality condition ($|C| \ge 2$ for the set of maximal lower-level children contained in any edge), the substitution preserves this property across both the local motif levels and the global host integration levels.
\end{itemize}
Thus, the composite structure $\mathcal{F}_{new}$ satisfies all axioms and forms a valid Finite Fractal Hyper-Tree.
\end{proof}

\subsection{Recursive Imbricated Families and Limit Behavior}

Restricting our focus to strictly imbricated structures, we can define infinite families generated either through direct sequential nesting or through algebraic substitution iterations.

\subsubsection{Example 1: The Bi-Indexed Recursive Sum Family}
To capture richer hierarchical dynamics, we extend the construction to a bi-indexed sequence of FHTs $(\mathcal{H}_{i,j})_{i,j \ge 0}$. Starting from a base initial structure $\mathcal{H}_{0,0} = \{\{0\}\}$ (a single-vertex FHT), we define the recurrence relation:
\[ \mathcal{H}_{i+1, j} = \mathcal{H}_{i, j} + \mathcal{F}_j \]
where $\mathbf{+}$ denotes the disjoint sum operator (combining local edges, stripping trivial singletons, and introducing a global hyperedge enclosing all components), and $(\mathcal{F}_j)_{j \ge 1}$ is a specified sequence of base motifs.

\paragraph{Universal Limit Property (Global Backbone):}
Regardless of the choice of motifs $(\mathcal{F}_j)$, taking the limit as $i \to \infty$ (at a fixed complexifying parameter $j$) yields a universally shared macro-structure: a countable, deeply nested inclusion hierarchy. Because each recursive sum step wraps previous structures within an overarching global hyperedge, the inclusion poset naturally gives rise to a discrete ultrametric space whose valuation reflects the depth of the lowest common encompassing hyperedge.

\paragraph{Local Asymptotic Regimes via Motif Sequences $(\mathcal{F}_j)_{j \ge 1}$:}
While the global ultrametric topology remains discrete and hierarchical, letting both $i$ and $j$ tend to infinity allows the local geometry of each hierarchical node to evolve according to the chosen family. For example :
\begin{itemize}
    \item If $\mathcal{F}_j$ consists of complete hyperedges of growing rank, local nodes become maximally dense, embedding high-dimensional incidence structures at every level of the ultrametric tree.
    \item  If $\mathcal{F}_j$ is a sequence of cycles of increasing length, the local blocks maintain a sparse, 1-dimensional circular topology, combining an ultrametric global depth with growing local loops.
    \item If $\mathcal{F}_j$ represents graph cliques of increasing size, the local structure transitions into increasingly dense graph-theoretic neighborhoods, contrasting with the sparse cycle regime.
\end{itemize}

\subsubsection{The Iterated Substitution Family and \texorpdfstring{$p$-Adic Analogues}{p-Adic Analogues}}
Alternatively, let $\mathcal{F}$ be a base Finite Fractal Hyper-Tree with $v_0 \ge 2$ vertices. We construct an infinite family of FHTs $(\mathcal{G}_n)_{n \ge 1}$ through repeated applications of the vertex substitution operator $\star$:
\[ \mathcal{G}_1 = \mathcal{F}, \quad \mathcal{G}_2 = \mathcal{F} \star \mathcal{F}, \quad \mathcal{G}_3 = (\mathcal{F} \star \mathcal{F}) \star \mathcal{F}, \quad \dots \]
In this setting, the number of vertices scales exponentially as $v_n = |V(\mathcal{G}_n)| = v_0^n$. 

Due to uniform contraction factors at each generation, the metric spaces $(\mathcal{G}_n, D_n)$ suggest a natural Cauchy-like behavior, pointing toward a convergence in the Gromov-Hausdorff sense toward a compact, totally disconnected ultrametric space, akin to a generalized Cantor set. Furthermore, when the base vertex count equals a prime number $v_0 = p$, the tree-divergence depth of points in this limiting regime shares structural properties with the $p$-adic norm $|\cdot|_p$. This provides a compelling heuristic bridge suggesting that such limit spaces might share local isometric features with the ring of $p$-adic integers $\mathbb{Z}_p$, opening a promising avenue for future research where the specific choice of the base motif $\mathcal{F}$ dictates the fine local texture of the realization.


\begin{thebibliography}{99}

\bibitem{zenodo_fht_v3}
SaloneJJ. 
\newblock \emph{SaloneJJ/FHT-Structures-and-Sperner-Enumerations (Version v3.0.0)}. 
\newblock Zenodo, 2026. 
\newblock DOI: \url{https://doi.org/10.5281/zenodo.22770767}.

\bibitem{mckay_nauty}
B. D. McKay, A. Piperno,
\emph{Practical graph isomorphism, II},
Journal of Symbolic Computation, vol. 60, pp. 94--112, 2014.

\bibitem{berge_hypergraphs}
C. Berge,
\emph{Hypergraphs: Combinatorics of Finite Sets},
North-Holland Mathematical Library, vol. 45, Elsevier, 1989.

\bibitem{voloshin_hypergraphs}
E. I. Voloshin,
\emph{Introduction to Graph and Hypergraph Theory},
Nova Science Publishers, 2009.

\bibitem{falconer_fractal}
K. Falconer,
\emph{Fractal Geometry: Mathematical Foundations and Applications},
John Wiley \& Sons, 2003.

\bibitem{rammal_ultrametric}
R. Rammal, G. Toulouse, M. A. Virasoro,
\emph{Ultrametricity in physics},
Reviews of Modern Physics, vol. 58, no. 3, pp. 765--788, 1986.

\bibitem{gromov_metric}
M. Gromov,
\emph{Metric Structures for Riemannian and Non-Riemannian Spaces},
Progress in Mathematics, vol. 152, Birkhäuser Boston, 1999.

\bibitem{koblitz_padic}
N. Koblitz,
\emph{$p$-Adic Numbers, $p$-Adic Analysis, and Zeta-Functions},
Graduate Texts in Mathematics, vol. 58, Springer-Verlag, 1984.

\bibitem{cassels_local}
J. W. S. Cassels,
\emph{Local Fields},
London Mathematical Society Student Texts, vol. 3, Cambridge University Press, 1986.

\bibitem{robert_padic}
A. M. Robert,
\emph{A Course in $p$-Adic Analysis},
Graduate Texts in Mathematics, vol. 198, Springer-Verlag, 2000.

\end{thebibliography}
\end{document}